\documentclass[11pt,reqno]{amsart}
\usepackage{graphicx}
\usepackage{amsmath,amssymb,amsthm}
\usepackage[mathlines]{lineno}
\usepackage{hyperref}
\usepackage{mathtools}
\usepackage{enumitem}
\usepackage{xcolor}
\usepackage[margin=1in]{geometry}

\theoremstyle{plain}
\newtheorem{theorem}{Theorem}[section] 
\newtheorem{claim}[theorem]{Claim}
\newtheorem{lemma}[theorem]{Lemma}
\newtheorem{corollary}[theorem]{Corollary}
\newtheorem{definition}[theorem]{Definition}

\newcommand{\cH}{\mathcal{H}}

\begin{document}

 \title{Independent transversals in hypergraphs with low codegrees}
\begin{abstract}
We prove that for every fixed $r\geq3$, $k>0$ and $\varepsilon\in(0,1)$, there is $\gamma>0$ such that the following holds for all sufficiently large $b$.
An $r$-uniform hypergraph $\cH$ whose vertex set is partitioned into parts of size at least $b$ contains at least $\lceil(1-\varepsilon)b\rceil$ pairwise vertex-disjoint independent transversals, provided that the average degree in each part is at most $kb^{r-1}$ and maximum $j$-codegree $\Delta_j(\cH)\leq\gamma b^{r-j}$ for $2\leq j<r$.
Under the same average degree, if $q$ is the minimum part size and $\Delta_j(\cH)=o(q^{r-j})$ for $2\leq j<r$, the maximum number of pairwise vertex-disjoint independent transversals is $(1-o(1))q$ as $q\to\infty$.
We develop an iterative halving reduction for partitioned hypergraphs to apply the theorem of Delcourt and Postle on matchings avoiding forbidden submatchings.
\end{abstract}

\author{Hanle Dai}
\address{School of Mathematical Sciences, Beijing University of Posts and Telecommunications, Beijing, China}
\email{hanledai@bupt.edu.cn}
\author{Luyining Gan}
\address{School of Mathematical Sciences, Beijing University of Posts and Telecommunications, Beijing, China; Key Laboratory of Mathematics and Information Networks (BUPT), Ministry of Education, China}
\email{elainegan@bupt.edu.cn}
\thanks{L. Gan was partially supported by the National Natural Science Foundation of China (12401446) and the Fundamental Research Funds for the Central Universities.}
\maketitle

\section{Introduction} 
Let $\cH=(V, E)$ be an $r$-uniform hypergraph (or briefly an $r$-graph) and let $\mathcal P=\{V_1,\ldots,V_m\}$ be a partition of $V(\cH)$. 
A \emph{transversal} of $\mathcal P$ is a set $T\subseteq V(\cH)$ such that $|T\cap V_i|=1$ for every $i\in[m]$~\footnote{Denote by $[k]$ the set of integers from $1$ to $k$.}.
A \emph{independent transversal} of $\cH$ with respect to $\mathcal P$ is a transversal $T$ such that $e\nsubseteq T$ for all $e\in E(\cH)$.
In the graph case $r=2$, the search for sufficient conditions guaranteeing independent transversals in graphs dates back to the early 1970s by Erd\H{o}s~\cite{Er72} and Bollob\'as, Erd\H{o}s and Szemer\'edi~\cite{BES75}. Since then, the problem has attracted considerable attention, both for its intrinsic combinatorial interest~\cite{GS22, LS07, ST06, Yu97a, Yu97b} and for its connections to linear arboricity, list colouring, and strong colouring~\cite{Al88, Ha01, Ha04, Al92, GS22}.

The notion extends naturally to hypergraphs.
For $r\ge3$, the corresponding existence problem has also been studied under sparsity and degree conditions. Erd\H{o}s, Gy\'arf\'as and {\L}uczak~\cite{EGL94} studied independent transversals in sparse partite $r$-graphs with parts of the same size and exactly one edge across any $r$ distinct parts, and they obtained bounds on the maximum number of parts guaranteeing the existence of an independent transversal. 
For an $r$-graph $\cH$ and a nonempty set $U\subseteq V(\cH)$, write $\overline d_{\cH}(U):=|U|^{-1}\sum_{v\in U}d_{\cH}(v)$ for the average degree of $U$ in $\cH$, where $d_{\cH}(v)$ is the number of edges of $\cH$ containing $v$.
Wanless and Wood~\cite{WW22} proved that an $r$-graph $\cH$ with a partition $V(\cH)=V_1\cup\cdots\cup V_m$ has at least $\left((r-1)b/r\right)^m$ distinct independent transversals, provided that $|V_i|\geq b$ and $\overline d_{\cH}(V_i)\leq\alpha_r b^{r-1}$ for every $i\in[m]$, where $\alpha_r:=(r-1)^{r-1}/r^r$.
Groenland, Kaiser, Treffers and Wales~\cite{GKTW23} proved that the coefficient $\alpha_r$ is asymptotically best possible by constructing, for every $\delta>0$ and every sufficiently large integer $b$, an $r$-graph with $|V_i|=b$ and $\overline d_{\cH}(V_i)\leq(\alpha_r+\delta)b^{r-1}$, yet the hypergraph has no independent transversal.
This motivates the question of whether additional codegree bound allows $\alpha_r$ to be replaced by any larger average degrees to guarantee an independent transversal.
 
\subsection{Our result} Let $\cH$ be an $r$-graph $\cH$ and let $S\in {V \choose j}$ with $j \in [r]$.
 The \emph{$j$-codegree} of $S$ in $\cH$, denoted $d_{\cH}(S)$, is the number of $(r-j)$-subsets $Y$ such that $S\cup Y$ is an edge in $\cH$.
 The \emph{maximum $j$-codegree} of $\cH$, denoted $\Delta_j(\cH)$, is the maximum of $d_{\cH}(S)$ over all $S \in {V\choose j}$. 
Since the coefficient $a_r=(r-1)^{r-1}/r^r$ in the average degree bound given by Wanless and Wood~\cite{WW22} is fixed and relatively small, it is natural to ask whether such a coefficient can be any fixed positive constant provided that the maximum $j$-codegrees are sufficiently small for $j\geq2$. 
Our main result gives an affirmative answer, and moreover guarantees linearly many pairwise vertex-disjoint independent transversals.
\begin{theorem}\label{thm:mainthm}
For every integer $r\geq 3$, any $k>0$, and $\varepsilon\in(0,1)$, there exist
$\gamma>0$ and $b_0>0$ such that the following holds for each $b\geq b_0$ and integer $m\geq 1$.
Let $\cH$ be an $r$-uniform hypergraph with a partition
$V(\cH)=V_1\cup\cdots\cup V_m$. Suppose that
\begin{itemize}
    \item $|V_i|\geq b$ for every $i\in[m]$,
    \item $\overline d_{\cH}(V_i)\leq kb^{r-1}$ for every $i\in[m]$,
    \item $\Delta_j(\cH)\leq \gamma b^{r-j}$ for every $2\leq j<r$.
\end{itemize}
Then $\cH$ contains at least $\lceil(1-\varepsilon)b\rceil$ pairwise vertex-disjoint independent transversals.
\end{theorem}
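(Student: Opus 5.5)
We will deduce Theorem~\ref{thm:mainthm} from the Delcourt--Postle theorem on matchings avoiding forbidden submatchings, which is mentioned in the abstract and which we treat as a black box. The natural encoding is an auxiliary hypergraph $G$. Fix $t=\lceil(1-\varepsilon)b\rceil$ and colours $c\in[t]$. The vertices of $G$ are the pairs $(v,c)$ with $v\in V(\cH)$, together with the pairs $(i,c)$ with $i\in[m]$. An edge of $G$ is a choice of a vertex $v\in V_i$ for colour $c$, encoded as $\{(v,\ast),(i,c)\}$; to make it uniform we use the vertex $v$ itself and the part--colour pair $(i,c)$. A matching in $G$ assigns each vertex at most one colour and each part at most one vertex per colour. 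A perfect matching on the $(i,c)$ side would therefore give $t$ disjoint transversals. Each edge $e=\{v_1,\dots,v_r\}\in E(\cH)$ with its vertices in distinct parts yields forbidden configurations: the $r$ edges $\{v_s,(i_s,c)\}$ for a common colour $c$. Edges of $\cH$ with two vertices in one part can never lie in a transversal and are discarded.

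The obstacle is that Delcourt--Postle needs $G$ to be essentially regular with bounded codegrees, and needs the forbidden configurations to have bounded degrees relative to the degree $D$ of $G$. Here the part sizes $|V_i|$ are unbounded and unequal, and only the average degree in each part is controlled, not the maximum. This is where we will develop an iterative halving reduction.

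\textbf{Step 1 (trimming).} In each $V_i$, delete the vertices of degree larger than $Kb^{r-1}$ with $K=k/\varepsilon'$. By Markov's inequality at most $\varepsilon'|V_i|$ vertices are removed, so a large part remains.

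\textbf{Step 2 (halving).} While some part has size larger than $2b$, split it randomly into two halves, each of which becomes a new part. Pairs of sibling parts must later receive disjoint transversals. To handle this, we instead keep a representative subset: we take a uniformly random subset of size exactly between $b'$ and $2b'$. Under random halving, each vertex degree roughly halves with respect to each other part, and the codegree conditions give the concentration, via Azuma or Talagrand inequalities, needed to retain the degree bound $O(b^{r-1})$ at every stage. The $j$-codegree hypothesis $\Delta_j\le\gamma b^{r-j}$ is exactly what makes the edge counts through a vertex have small Lipschitz constants. After $O(\log)$ rounds every part has size in $[(1-\varepsilon')b,\,2b]$ and all degrees are at most $K'b^{r-1}$. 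We then restrict each part to exactly $b''=(1-\varepsilon'')b$ vertices, making $G$ regular: each $(i,c)$ has degree $b''$ and each $v$ has degree $t\approx b''$.

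\textbf{Step 3 (applying Delcourt--Postle).} With $D\approx b$, we check the conditions. The $G$-codegrees are at most $1$. A vertex $(v,\cdot)$ lies in at most $t\cdot Kb^{r-1}=O(D^{r})$ forbidden $r$-sets, which is of the required order $D^{r-1}$ per edge. For fewer edges, fixing $j\ge2$ edges of a configuration corresponds to fixing $j$ vertices of $\cH$ in a common colour, giving at most $\Delta_j\le\gamma b^{r-j}=\gamma D^{r-j}$ extensions. This is the small codegree condition, once $\gamma$ is small compared with the error $\beta$ in the theorem. The theorem then gives a matching, avoiding forbidden configurations, that covers all but a $D^{-\beta}$ fraction of the $(i,c)$.

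\textbf{Step 4 (completion).} For colours $c$ in which some parts are uncovered, discard that colour. Averaging over colours, all but $\varepsilon''t$ colours are complete on all $m$ parts. However, $m$ may be huge compared with $D^{\beta}$, so averaging is not enough. We therefore need the stronger ``every vertex of a given type is covered'' version of Delcourt--Postle, using a suitable family of weight functions with one trackable set per colour. We expect this step, namely uniform coverage independent of $m$, to be the main difficulty. A fallback is to modify the construction slightly with reserve vertices and perform a greedy repair.
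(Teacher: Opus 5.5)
\textbf{The gap is in your Step 3.} In your encoding, $\ell\ge2$ edges of $G$ lie in a common forbidden configuration only if they share a colour and their $\cH$-endpoints lie in a common edge of $\cH$. So $\Delta_\ell(\mathcal F)$ is just the maximum of $d_{\cH}(S)$ over $\ell$-sets $S$, which can be as large as $\gamma b^{r-\ell}$.

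Lemma~\ref{lem:DP} requires $\Delta_\ell(\mathcal F)\le D^{r-\ell-\beta}$, which is a \emph{polynomial} saving. In your setup $D$ is forced to be at least the $B$-side degree $t\approx(1-\varepsilon)b$, and at most the part sizes. You would therefore need $\gamma\lesssim b^{-\beta}$, which fails for every fixed $\gamma$ once $b$ is large. No choice of ``$\gamma$ small compared with $\beta$'' rescues this.

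Your Step 2 does not help. It keeps parts of size $\Theta(b)$, so the normalized codegrees stay of constant order at scale $D\approx b$. For the purpose you state, no randomness is needed anyway: after trimming, arbitrary subsets suffice, because induced subhypergraphs never have larger degrees or codegrees.

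Conversely, the difficulty you flag in Step 4 is not real. The cited theorem already gives an $A$-perfect matching. Trimming leaves $A$-side degrees at least $(1-\varepsilon/2)b$ against $B$-side degree $t\le(1-\varepsilon)b+1$. That is a constant-factor surplus, which beats $(1+D^{-\alpha})D$ regardless of $m$.

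\textbf{The missing idea} is to descend to a \emph{bounded} scale, where a fixed small constant becomes a polynomial saving. The paper finds the transversals greedily, one at a time. After trimming, every part still has more than $\varepsilon b/2$ unused vertices, so one takes subsets $U_i$ of size $q=\lceil\varepsilon b/2\rceil$ and seeks a single independent transversal (Claim~\ref{claim:max-degree}).

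To do this it repeatedly keeps a random half of every part. It uses Hoeffding for the part sizes, McDiarmid for $X_S$ with Lipschitz constants $d_{\cH}(S\cup\{x\})$, and the local lemma. Each round raises $\Delta_j/q^{r-j}$ by at most $z\sigma(q)$, and these errors sum to $O(\sigma(q_0))$. The process stops at a part size $q_t\in[q_0,3q_0]$ with $\Delta_j(\cH_t)\le 2\gamma q_t^{r-j}$.

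With the claim's constant $\gamma=\frac12(3q_0)^{-1/4}$ (rescaled by $(\varepsilon/2)^{r-2}$ in the theorem), this gives $\Delta_j(\cH_t)\le q_t^{r-j-1/4}$. Lemma~\ref{lem:DP} is then applied with $D=\lfloor q_t/2\rfloor$, a large constant.

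This reduction is incompatible with your one-shot $t$-colour encoding, because $t\approx b$ colours force $D\gtrsim b$. That is why the paper instead spends the $\varepsilon b/2$ slack on a greedy iteration, finding one transversal at a time.
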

Note that the constants $\gamma:=\gamma(r, k, \varepsilon)$ and $b_0:=b_0(r, k, \varepsilon)$ are independent of the number of parts.
Moreover, the theorem allows vertex degrees to be substantially larger than the average degree bound.

The following corollary of Theorem~\ref{thm:mainthm}, stated in terms of a common upper bound on the average degree in each part, shows that the maximum number of pairwise vertex-disjoint independent transversals is asymptotic to the minimum part size.

\begin{corollary}\label{cor:asymptotic-packing}
Fix an integer $r\geq3$ and a constant $a>0$.
Let $(\cH_n)_{n\geq1}$ be a sequence of $r$-graphs, where each $\cH_n$ has a vertex partition $\mathcal P_n=\{V_{n,1},\ldots,V_{n,m_n}\}$.
Write $q_n=\min_{i\in[m_n]}|V_{n,i}|$, and let $(D_n)_{n\geq1}$ be a sequence of positive real numbers tending to infinity.
Suppose that for every $n\geq1$ and every $i\in[m_n]$
\begin{itemize}
    \item $|V_{n,i}|\geq aD_n^{1/(r-1)}$,
    \medskip
    \item $\overline d_{\cH_n}(V_{n,i})\leq D_n$,
    \medskip
    \item $\Delta_j(\cH_n)/D_n^{(r-j)/(r-1)}\to0$ as $n\to\infty$ for every $2\leq j<r$.
\end{itemize}
Then the maximum number of pairwise vertex-disjoint independent transversals of $\cH_n$ with respect to $\mathcal P_n$ is $(1-o(1))q_n$ as $n\to\infty$.
\end{corollary}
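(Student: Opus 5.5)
The plan is to derive Corollary~\ref{cor:asymptotic-packing} from Theorem~\ref{thm:mainthm} by a suitable choice of $b$, together with the trivial upper bound. For the upper bound, note that the smallest part $V_{n,i}$ has $q_n$ vertices. Every transversal uses exactly one vertex of this part, so any family of pairwise vertex-disjoint transversals has at most $q_n$ members. It therefore suffices to show that for every $\varepsilon\in(0,1)$ there is $n_0$ such that, for all $n\ge n_0$, $\cH_n$ has at least $(1-\varepsilon)q_n$ pairwise disjoint independent transversals.

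Fix $\varepsilon$ and put $b:=q_n$, so that $|V_{n,i}|\ge b$ for every $i$. Since $q_n\ge aD_n^{1/(r-1)}$, we have $D_n\le a^{-(r-1)}q_n^{r-1}$. The average degree condition therefore holds with $k:=a^{-(r-1)}$, which is a constant depending only on $a$ and $r$. Let $\gamma=\gamma(r,k,\varepsilon)$ and $b_0=b_0(r,k,\varepsilon)$ be as in Theorem~\ref{thm:mainthm}.

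The two remaining hypotheses need more care.

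\emph{Part sizes.} Since $q_n\ge aD_n^{1/(r-1)}$ and $D_n\to\infty$, we get $q_n\to\infty$. Hence $b=q_n\ge b_0$ for all large $n$.

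\emph{Codegrees.} The assumption controls $\Delta_j$ relative to $D_n$, not to $q_n$. Because $D_n^{(r-j)/(r-1)}\le a^{-(r-j)}q_n^{r-j}$, we obtain
\[
\Delta_j(\cH_n)\le \frac{\Delta_j(\cH_n)}{D_n^{(r-j)/(r-1)}}\,a^{-(r-j)}q_n^{r-j}.
\]
The ratio on the right tends to $0$ for every $2\le j<r$, and there are finitely many such $j$. So for all large $n$ the ratio is below $\gamma a^{r-j}$ for each $j$, which gives $\Delta_j(\cH_n)\le\gamma q_n^{r-j}=\gamma b^{r-j}$.

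With all hypotheses verified, Theorem~\ref{thm:mainthm} yields at least $\lceil(1-\varepsilon)q_n\rceil$ pairwise disjoint independent transversals for all $n\ge n_0(\varepsilon)$. Since $\varepsilon$ is arbitrary, this lower bound matches the trivial upper bound $q_n$, and the maximum is $(1-o(1))q_n$.

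The only point needing attention is that $k$ must stay fixed while $n$ varies. This is exactly what the assumption $|V_{n,i}|\ge aD_n^{1/(r-1)}$ provides: it makes the average degree bound a constant multiple of $q_n^{r-1}$, so $\gamma$ and $b_0$ depend only on $r$, $a$ and $\varepsilon$. The number of parts $m_n$ is unrestricted, which is allowed because $\gamma$ and $b_0$ in Theorem~\ref{thm:mainthm} do not depend on $m$.
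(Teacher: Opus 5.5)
Your proposal is correct and is the same argument as the paper's: apply Theorem~\ref{thm:mainthm} with $b=q_n$ and $k=a^{-(r-1)}$, then combine the result with the trivial upper bound $q_n$ coming from the smallest part. You also write out the checks the paper leaves implicit, namely $q_n\to\infty$, $D_n\le a^{-(r-1)}q_n^{r-1}$, and $\Delta_j(\cH_n)\le\gamma q_n^{r-j}$ for all large $n$.
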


Indeed, for any fixed $\varepsilon\in(0,1)$ and all sufficiently large $n$, apply Theorem~\ref{thm:mainthm} with $b=q_n$ and $k=a^{-(r-1)}$.
Since $\varepsilon$ is arbitrary and $q_n$ is an upper bound, this gives the stated asymptotic estimate.

\subsection{Proof ideas} 

To find a single independent transversal, we first discard vertices of large degree and keep equal-size subsets of the remaining parts, which converts the average degree assumption into a maximum codegree bound.
The problem can then be expressed in terms of matchings avoiding forbidden submatchings in an auxiliary bipartite graph.
Delcourt and Postle's theorem~\cite{DP2026} applies in this setting, but requires a polynomial saving in the codegrees.
To overcome this restriction, we develop a halving reduction for multipartite hypergraphs by combining part size control idea from Loh and Sudakov~\cite{LS07} with codegrees control idea from Cooper and Mubayi~\cite{CM16}.
 It allows the polynomial saving to be replaced by a sufficiently small fixed coefficient in the original codegree bounds. We repeatedly retain approximately half of each part, preserving equal part sizes and controlling the maximum codegrees of all relevant sizes. The iteration stops at a sufficiently large constant part size. Thus, the halving process converts the sufficiently small fixed coefficient in the original maximum codegree bounds into the polynomial saving required by the Delcourt and Postle's theorem. Repeating the entire argument after deleting each transversal yields at least $\lceil(1-\varepsilon)b\rceil$ pairwise vertex-disjoint independent transversals.

\medskip

The remainder of the paper is organized as follows.
In Section~\ref{sec:pre}, we introduce configuration hypergraphs, collect the probabilistic tools and useful lemmas.
We establish the halving lemma and derive the criterion for the existence of independent transversals in Section~\ref{sec:halving} and prove the main theorem, Theorem~\ref{thm:mainthm}, in Section~\ref{sec:pf-main}.

\section{Preliminaries}\label{sec:pre}

We recall the notion of a configuration hypergraph~\cite[Definition~1.6]{DP2026}, restricting to the setting needed in our proof.

\begin{definition}[Configuration hypergraph]
Let $r\geq3$ be an integer and let $G=(A,B)$ be a bipartite graph.
An $r$-graph $\mathcal F$ is a \emph{configuration hypergraph} for $G$ if $V(\mathcal F)=E(G)$ and $E(\mathcal F)$ consists of a set of matchings of size $r$ in $G$.
\end{definition}
A matching $M\subseteq E(G)$ is \emph{$\mathcal F$-avoiding}~\footnote{Such a matching is also called \emph{conflict-free} in the literature, see~\cite{GJKKL24}.} if $F\nsubseteq M$ for every $F\in E(\mathcal F)$.
A matching of $G$ is \emph{$A$-perfect} if it covers every vertex of $A$.
The independent transversal problem can be reformulated as the problem of finding an $\mathcal F$-avoiding $A$-perfect matching in a suitable bipartite graph.
We use the following special case of a theorem of Delcourt and Postle, stated in our notation, to derive the sufficient condition for independent transversals in Lemma~\ref{lem:indep-trans}.
\begin{lemma}~\cite[Theorem~1.16]{DP2026}\label{lem:DP}
For every integer $r\geq3$ and every $\beta\in(0,1)$, there exist an integer $D_\beta\geq 0$ and a constant $\alpha>0$ such that the following holds for every $D\geq D_\beta$.
Let $G=(A,B)$ be a bipartite graph such that every vertex in $A$ has degree at least $(1+D^{-\alpha})D$ and every vertex in $B$ has degree at most $D$.
Let $\mathcal F$ be an $r$-uniform configuration hypergraph for $G$.
Suppose that $\Delta_1(\mathcal F)\leq\alpha D^{r-1}\log D$ and $\Delta_\ell(\mathcal F)\leq D^{r-\ell-\beta}$ for every $2\leq\ell<r$.
Then $G$ contains an $\mathcal F$-avoiding $A$-perfect matching.
\end{lemma}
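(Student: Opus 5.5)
The statement is quoted verbatim from Delcourt and Postle, so the plan is to derive it as a direct specialisation of \cite[Theorem~1.16]{DP2026} rather than to reprove it. Their theorem is stated for a bipartite graph $G=(A,B)$ together with a configuration hypergraph $\mathcal F$ whose edges are matchings of $G$ of size between $2$ and some fixed bound $g$. In that generality the hypotheses are (i) a lower bound on the degrees of $A$-vertices of the form $(1+D^{-\alpha})D$, (ii) an upper bound $D$ on the degrees of $B$-vertices, (iii) bounds on the $1$-degrees of the $i$-uniform parts $\mathcal F^{(i)}$ of order $\alpha D^{i-1}\log D$, and (iv) codegree bounds $\Delta_\ell(\mathcal F^{(i)})\le D^{i-\ell-\beta}$ for $2\le \ell<i$. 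The conclusion is an $\mathcal F$-avoiding $A$-perfect matching.

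First I would set $g=r$ and let $\mathcal F$ consist only of $r$-uniform configurations, so that $\mathcal F^{(i)}=\emptyset$ for $2\le i<r$. All the hypotheses on the lower-uniformity parts are then vacuous. Second, I would check that the remaining $r$-uniform hypotheses coincide exactly with those in Lemma~\ref{lem:DP}, namely $\Delta_1(\mathcal F)\le \alpha D^{r-1}\log D$ and $\Delta_\ell(\mathcal F)\le D^{r-\ell-\beta}$. Third, I would match the quantifiers: $\alpha$ and $D_\beta$ depend only on $r$ (equivalently on $g$) and on $\beta$, and the conclusion holds for every $D\ge D_\beta$. Where the original theorem uses slightly different normalisations, I would shrink $\alpha$ and enlarge $D_\beta$; this is harmless because the hypotheses only become stronger. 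Such differences could be a constant in front of $D^{r-1}\log D$, the degree lower bound written with a different exponent, or the condition $\ell<r$ versus $\ell\le r-1$.

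The only genuine obstacle is this bookkeeping of normalisations. The bound on $\Delta_1$ must be the ``$\log D$'' form, which permits conflict degree slightly larger than $D^{r-1}$, and the codegree saving $\beta$ must be allowed to be arbitrary with $\alpha$ chosen afterwards. I would verify both against the precise wording of \cite[Definition~1.6, Theorem~1.16]{DP2026}.

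Should a self-contained argument be desired instead, the route would be the one Delcourt and Postle follow:
\begin{enumerate}
\item Run a conflict-free nibble in the spirit of \cite{GJKKL24} to obtain an $\mathcal F$-avoiding matching covering all but a $D^{-c}$ fraction of $A$, with pseudorandom leftover.
\item Complete this matching using reserved random edges and an absorbing-type argument that avoids conflicts.
\end{enumerate}
Of these, the completion to an $A$-perfect matching is by far the hardest step. We do not need it here, since we only invoke the theorem.
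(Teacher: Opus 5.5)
Your proposal is correct and matches the paper. The paper gives no separate proof and simply invokes \cite[Theorem~1.16]{DP2026} as the special case where $G$ is a bipartite graph and every configuration has size exactly $r$, so all hypotheses for lower uniformities are vacuous and the constants depend only on $r$ and $\beta$.

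One item to add to your bookkeeping: if Delcourt--Postle's formulation also imposes a codegree bound on $G$ itself, it holds trivially here, since in a simple graph any two vertices lie in at most $1\le D^{1-\beta}$ common edges.
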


We next collect the probabilistic tools used in the proof of the halving lemma in Section~\ref{sec:pf-main}, including two concentration inequalities and the Lov\'asz Local Lemma. Concentration inequalities are the tools for controlling the deviation of random variables from their expectations.

We first state Hoeffding's inequality~\cite[Theorem~2]{H63}, which shows that the sum of independent bounded random variables is sharply concentrated around its expectation.
It will be used to control the sizes of the retained parts.
\begin{lemma}[Hoeffding's inequality~\cite{H63}]\label{lem:H-ineq}
Let $X_1,\ldots,X_n$ be independent random variables satisfying $a_i\leq X_i\leq b_i$ almost surely for every $i\in[n]$, and set $X:=\sum_{i=1}^n X_i$.
Then, for every $t>0$,
\[
\Pr\left(\mathbb E [ X]-X\geq t\right)\leq\exp\left(-\frac{2t^2}{\sum_{i=1}^n(b_i-a_i)^2}\right).
\]
\end{lemma}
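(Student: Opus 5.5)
Hoeffding's inequality is cited from \cite{H63}, so there is nothing new to invent; the plan is the standard exponential-moment (Chernoff) argument. Set $Y_i:=\mathbb E[X_i]-X_i$, so that $Y_1,\ldots,Y_n$ are independent with $\mathbb E[Y_i]=0$ and each $Y_i$ lies in an interval of length $b_i-a_i$. For any $\lambda>0$, Markov's inequality applied to $e^{\lambda\sum_i Y_i}$ gives
\[
\Pr\left(\mathbb E[X]-X\geq t\right)\leq e^{-\lambda t}\,\mathbb E\Bigl[e^{\lambda\sum_i Y_i}\Bigr]=e^{-\lambda t}\prod_{i=1}^n\mathbb E\bigl[e^{\lambda Y_i}\bigr],
\]
where the factorisation uses independence.

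The key step, and the only real work, is Hoeffding's lemma. If $Y$ has mean zero and $c\le Y\le d$ almost surely, then $\mathbb E[e^{\lambda Y}]\le \exp(\lambda^2(d-c)^2/8)$. To prove it I would first use convexity of $y\mapsto e^{\lambda y}$ on $[c,d]$:
\[
e^{\lambda y}\le \frac{d-y}{d-c}e^{\lambda c}+\frac{y-c}{d-c}e^{\lambda d}.
\]
Taking expectations with $\mathbb E[Y]=0$ gives $\mathbb E[e^{\lambda Y}]\le e^{L(h)}$. Here $h=\lambda(d-c)$, $p=-c/(d-c)\in[0,1]$, and $L(h)=-hp+\log(1-p+pe^h)$. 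Then $L(0)=L'(0)=0$, and $L''(h)=u(1-u)\le 1/4$, where $u=pe^h/(1-p+pe^h)$. Taylor's theorem then gives $L(h)\le h^2/8$. I expect this calculus verification to be the main (though routine) obstacle.

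Combining, for every $\lambda>0$,
\[
\Pr\left(\mathbb E[X]-X\ge t\right)\le \exp\Bigl(-\lambda t+\tfrac{\lambda^2}{8}\sum_{i=1}^n(b_i-a_i)^2\Bigr).
\]
Choose $\lambda=4t/\sum_i(b_i-a_i)^2$ to minimise the exponent. This gives exactly $\exp\bigl(-2t^2/\sum_i(b_i-a_i)^2\bigr)$. If $\sum_i(b_i-a_i)^2=0$, each $X_i$ is almost surely constant, so the probability is $0$ and the claim holds trivially.
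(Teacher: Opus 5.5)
Your proof is correct. The paper gives no proof of this lemma and simply cites Theorem~2 of Hoeffding's paper; your argument (Markov's inequality applied to $e^{\lambda\sum_i Y_i}$, factorisation by independence, Hoeffding's lemma via the convexity bound and $L''(h)=u(1-u)\le 1/4$, then the choice $\lambda=4t/\sum_i(b_i-a_i)^2$) is essentially Hoeffding's original proof. The only loose end is a single index with $b_i=a_i$ when $\sum_i(b_i-a_i)^2>0$: there $p$ is undefined, but $Y_i=0$ almost surely and $\mathbb E[e^{\lambda Y_i}]=1$, so the bound for that factor holds trivially.
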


To control the codegree bounds, the second concentration inequality we will use is the bounded differences inequality, also known as McDiarmid's inequality~\cite{M89}. 
It provides concentration for a function of independent random variables when changing any single variable changes the value of the function by only a bounded amount.

\begin{lemma}[Bounded differences inequality~\cite{M89}]\label{lem:Bd-ineq}
Let $X_1,\ldots,X_n$ be independent random variables, where $X_i$ takes values in a set $\Omega_i$.
Let $f:\Omega_1\times\cdots\times\Omega_n\to\mathbb R$ be a measurable function.
Suppose that there are nonnegative constants $c_1,\ldots,c_n$ such that, for every $i\in[n]$, we have $|f(x)-f(x')|\leq c_i$ whenever $x,x'\in\Omega_1\times\cdots\times\Omega_n$ differ only in their $i$th coordinate.
Set $Z:=f(X_1,\ldots,X_n)$.
Then, for every $t>0$,
\[
\Pr\left(Z-\mathbb E[ Z]\geq t\right)\leq\exp\left(-\frac{2t^2}{\sum_{i=1}^n c_i^2}\right).
\]
\end{lemma}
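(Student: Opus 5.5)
This inequality is McDiarmid's classical result, so the plan is to reproduce the standard martingale proof: Doob martingale plus Hoeffding's lemma for a single bounded increment. Let $\mathcal G_i$ be the $\sigma$-algebra generated by $X_1,\dots,X_i$. Set $Z_i:=\mathbb E[Z\mid\mathcal G_i]$, so that $Z_0=\mathbb E[Z]$ and $Z_n=Z$. Then
\[
Z-\mathbb E[Z]=\sum_{i=1}^n D_i, \qquad D_i:=Z_i-Z_{i-1},
\]
and $\mathbb E[D_i\mid\mathcal G_{i-1}]=0$.

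The key step is to show that, conditionally on $\mathcal G_{i-1}$, the increment $D_i$ lies in an interval of length at most $c_i$ whose endpoints are $\mathcal G_{i-1}$-measurable. By independence, we can write $Z_i=g_i(X_1,\dots,X_i)$, where
\[
g_i(x_1,\dots,x_i)=\mathbb E\, f(x_1,\dots,x_i,X_{i+1},\dots,X_n).
\]
Define
\[
L_i:=\inf_{y\in\Omega_i} g_i(X_1,\dots,X_{i-1},y)-Z_{i-1}, \qquad U_i:=\sup_{y\in\Omega_i} g_i(X_1,\dots,X_{i-1},y)-Z_{i-1}.
\]
Then $L_i\le D_i\le U_i$. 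The bounded differences hypothesis, applied inside the expectation defining $g_i$ (the same $X_{i+1},\dots,X_n$ appear for both values $y,y'$), gives $U_i-L_i\le c_i$.

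Next, apply Hoeffding's lemma conditionally: a mean-zero variable confined to an interval of length $c$ satisfies $\mathbb E[e^{sD}]\le e^{s^2c^2/8}$. We can take this as the lemma underlying Lemma~\ref{lem:H-ineq}, or prove it directly by convexity of $e^{sx}$ and a second-order Taylor bound on the log-moment generating function. Conditionally on $\mathcal G_{i-1}$ this yields $\mathbb E[e^{sD_i}\mid\mathcal G_{i-1}]\le e^{s^2c_i^2/8}$. Peeling off the conditional expectations from $i=n$ down to $i=1$ then gives
\[
\mathbb E\bigl[e^{s(Z-\mathbb E Z)}\bigr]\le \exp\Bigl(\tfrac{s^2}{8}\sum_i c_i^2\Bigr).
\]

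Finally, Markov's inequality gives
\[
\Pr(Z-\mathbb E Z\ge t)\le \exp\Bigl(-st+\tfrac{s^2}{8}\sum_i c_i^2\Bigr),
\]
and the choice $s=4t/\sum_i c_i^2$ produces the bound $\exp\bigl(-2t^2/\sum_i c_i^2\bigr)$. The main point needing care is the range bound $U_i-L_i\le c_i$. A naive argument only gives $|D_i|\le c_i$, which yields an interval of length $2c_i$ and loses a factor of $4$ in the exponent. The fix is precisely to bound $D_i$ between the two $\mathcal G_{i-1}$-measurable endpoints $L_i,U_i$ defined above. Measurability of the $\sup$ and $\inf$ is a minor technicality; it can be handled by working with countable or finite $\Omega_i$, which suffices for our application, or by taking essential suprema.
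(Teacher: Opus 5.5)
Your proof is correct, but note that the paper gives no proof of this lemma. It is quoted as a black box from McDiarmid~\cite{M89} and used only in the halving lemma (Lemma~\ref{lem:halving}), where the variables are the Bernoulli indicators $I_x$.

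What you wrote is the standard argument behind that citation: the Doob martingale, then a conditional Hoeffding lemma, then a Chernoff bound. You correctly isolate the one delicate point. The increment $D_i$ must be bounded between endpoints $L_i\le 0\le U_i$ that depend only on $X_1,\dots,X_{i-1}$, with $U_i-L_i\le c_i$. This step uses the independence of $X_i$ from $(X_{i+1},\dots,X_n)$, and it is exactly what gives the constant $2$ in the exponent instead of the $1/2$ of plain Azuma.

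Two small refinements:
\begin{itemize}
\item The measurability of $L_i,U_i$ can be avoided altogether. Fix $x_1,\dots,x_{i-1}$ and apply the unconditional Hoeffding lemma to $g_i(x_1,\dots,x_{i-1},X_i)-g_{i-1}(x_1,\dots,x_{i-1})$. This variable has mean zero by independence and Fubini, and it lies in a deterministic interval of length at most $c_i$. Then integrate over $(x_1,\dots,x_{i-1})$.
\item The choice $s=4t/\sum_i c_i^2$ requires $\sum_i c_i^2>0$. The degenerate case is trivial, since then $f$ is constant.
\end{itemize}

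As for what each approach buys: the citation keeps the paper short, while your version makes it self-contained. For the paper's purposes, even the weaker Azuma constant would suffice. It would only change $\Pr(B_S)\le q^{-2\ell^2/(s\lambda^2)}$ to $q^{-\ell^2/(2s\lambda^2)}$, and this is absorbed by choosing $\ell$ larger in the proof of Lemma~\ref{lem:halving}.
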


The concentration inequalities above show that the probability that a random variable deviates substantially from its expectation is small. 
We refer to such undesirable deviations as bad events, that is, the failures of the required size and codegree bounds. 
To ensure that all these bad events do not occur, we use the following Lov\'asz local lemma.
\begin{lemma}[Lov\'asz local lemma {\cite{AS16}}]\label{lem:lll}
Let $\mathcal{A}$ be a finite collection of events such that $\Pr(A)\leq p$ for every $A\in\mathcal{A}$.
Suppose that each $A\in\mathcal{A}$ is mutually independent of all but at most $d$ other events in $\mathcal{A}$.
If $ep(d+1)\leq 1$, then
\[
\Pr\left(\bigcap_{A\in\mathcal{A}}\overline{A}\right)>0.
\]
\end{lemma}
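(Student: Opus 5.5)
We prove the symmetric local lemma by the standard conditional-probability induction, following Alon and Spencer. Write $\mathcal A=\{A_1,\ldots,A_n\}$. If $d=0$, every event is mutually independent of all the others. Then $\Pr(\bigcap\overline{A_i})=\prod(1-\Pr(A_i))\geq(1-p)^n>0$, since $p\leq 1/e<1$. So we assume $d\geq1$.

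The key claim is the following. For every $S\subseteq[n]$ with $\Pr(\bigcap_{j\in S}\overline{A_j})>0$ and every $i\notin S$,
\[
\Pr\Bigl(A_i\,\Big|\,\bigcap_{j\in S}\overline{A_j}\Bigr)\leq\frac{1}{d+1}.
\]
We prove it by induction on $|S|$.

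If $S=\emptyset$, the bound is just $\Pr(A_i)\leq p\leq 1/(e(d+1))\leq 1/(d+1)$.

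For the inductive step, split $S=S_1\cup S_2$. Here $S_1$ consists of those $j\in S$ such that $A_i$ may depend on $A_j$, so $|S_1|\leq d$. The event $A_i$ is mutually independent of the family $\{A_j:j\in S_2\}$. If $S_1=\emptyset$, mutual independence gives the conditional probability exactly $\Pr(A_i)\leq p$. Otherwise write
\[
\Pr\Bigl(A_i\,\Big|\,\bigcap_{S}\overline{A_j}\Bigr)=\frac{\Pr\bigl(A_i\cap\bigcap_{S_1}\overline{A_j}\,\big|\,\bigcap_{S_2}\overline{A_j}\bigr)}{\Pr\bigl(\bigcap_{S_1}\overline{A_j}\,\big|\,\bigcap_{S_2}\overline{A_j}\bigr)}.
\]
The numerator is at most $\Pr(A_i\mid\bigcap_{S_2}\overline{A_j})=\Pr(A_i)\leq p$, using mutual independence.

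For the denominator, list $S_1=\{j_1,\ldots,j_s\}$ with $s\leq d$ and expand it as a telescoping product. Each factor has the form $1-\Pr(A_{j_t}\mid\bigcap_{S_2}\overline{A_j}\cap\overline{A_{j_1}}\cap\cdots\cap\overline{A_{j_{t-1}}})$, in which the conditioning set has size strictly less than $|S|$. By the induction hypothesis each factor is at least $1-1/(d+1)$. Hence the denominator is at least
\[
\Bigl(1-\frac1{d+1}\Bigr)^{d}\geq\frac1e.
\]
The conditioning events all have positive probability, because the same telescoping bound applied along the way shows inductively that every intersection $\bigcap_{j\in T}\overline{A_j}$ has positive probability. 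Combining numerator and denominator, the ratio is at most $ep\leq 1/(d+1)$, which completes the induction.

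Finally, by the chain rule,
\[
\Pr\Bigl(\bigcap_{i=1}^n\overline{A_i}\Bigr)=\prod_{i=1}^n\Bigl(1-\Pr\bigl(A_i\mid\overline{A_1}\cap\cdots\cap\overline{A_{i-1}}\bigr)\Bigr)\geq\Bigl(1-\frac1{d+1}\Bigr)^n>0.
\]
The claim applies to each factor in turn: positivity of $\Pr(\overline{A_1}\cap\cdots\cap\overline{A_{i-1}})$ follows from the earlier factors, so every conditioning is well defined.

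The main obstacle is the bookkeeping in the inductive step. One must make sure that conditioning on $\bigcap_{S_2}\overline{A_j}$ does not change $\Pr(A_i)$, which uses mutual independence and not merely pairwise independence. One must also ensure every conditioning event has positive probability. This is why the claim is stated only for sets $S$ whose intersection has positive probability, and why positivity is maintained alongside the induction.
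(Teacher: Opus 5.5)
Your proof is correct. It is the standard conditional-probability induction from Alon--Spencer (the general lemma specialized to $x_i=1/(d+1)$, with $d=0$ handled separately), and the paper simply cites that source rather than proving the lemma itself. One small simplification: the conditioning events in your inductive step have positive probability simply because $\bigcap_{j\in S}\overline{A_j}\subseteq\bigcap_{j\in T}\overline{A_j}$ for $T\subseteq S$, so no extra inductive bookkeeping is needed there.
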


\section{Halving lemma and Independent transversal criterion}\label{sec:halving}
In this section, we prove two fundamental lemmas. Lemma~\ref{lem:indep-trans} gives a sufficient condition for the existence of an independent transversal, while Lemma~\ref{lem:halving} allows us to reduce the conditions of our main theorem to those of Lemma~\ref{lem:indep-trans}.

The following halving lemma adapts the earlier random halving arguments~\cite{LS07,CM16} to a given vertex partition.
It allows us to keep approximately half of each part while controlling the maximum codegrees.

\begin{lemma}[Halving lemma]\label{lem:halving}
Fix $r\geq 3$ and $\lambda\geq 1$. There exist constants $\ell, z, q_0 > 0$ such that the following holds. Let $\cH$ be an $r$-graph and $V(\cH)=V_1\cup\cdots\cup V_m$ be a partition of $\cH$. Suppose $q:=|V_i|\ge q_*$
 and $\Delta_j(\cH)\leq \lambda q^{r-j}$ for every $j\in[r-1]$. Then for $i\in [m]$ and $j\in[r-1]$ there is an induced subhypergraph $H'$ such that 
     \[q'\coloneq |V_i\cap V(H')| = \left\lfloor \frac{q}{2}-\ell\sqrt{q\log q}\right\rfloor,\]
     and 
     \begin{equation}
     \label{equ:norm-degree}
     \frac{\Delta_j(H')}{(q')^{r-j}}\leq\frac{\Delta_j(\cH)}{q^{r-j}}+z\sigma(q),
     \end{equation}
    where $\sigma(q)=\sqrt{\frac{\log q}{q}}$.
\end{lemma}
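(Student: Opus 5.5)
We select each vertex of $V(\cH)$ independently with probability $1/2$, giving a random set $R$. Then we trim each part to exactly $q'$ vertices and take $H'$ to be the induced subhypergraph on the retained vertices. Trimming (deleting vertices of $R\cap V_i$ beyond the first $q'$, in any fixed order) can only decrease codegrees. So it suffices to show that with positive probability:
\begin{itemize}
\item[(i)] $|R\cap V_i|\geq q'$ for every $i\in[m]$;
\item[(ii)] for every $j\in[r-1]$ and every $S\in\binom{V}{j}$, the codegree $d_{\cH[R]}(S)$ is at most $\Delta_j(\cH)2^{-(r-j)}+C\,q^{r-j}\sigma(q)$.
\end{itemize}
Given (ii), dividing by $(q')^{r-j}$ and using $q'\geq (q/2)(1-O(\sigma(q)))$ gives
\[
\frac{\Delta_j(H')}{(q')^{r-j}}\le \frac{\Delta_j(\cH)}{q^{r-j}}\bigl(1+O(\sigma(q))\bigr)+O(\sigma(q))\le \frac{\Delta_j(\cH)}{q^{r-j}}+z\sigma(q),
\]
since $\Delta_j(\cH)/q^{r-j}\leq\lambda$ and $z$ may depend on $\lambda,r$.

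\textbf{Step (i).} By Lemma~\ref{lem:H-ineq} with $t=\ell\sqrt{q\log q}$,
\[
\Pr\bigl(|R\cap V_i|<q'\bigr)\le \exp(-2\ell^2\log q)=q^{-2\ell^2}.
\]

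\textbf{Step (ii).} Fix $S$ with $|S|=j$ and condition on $S$ (only the $(r-j)$-sets $Y$ matter). Let $Z=\#\{Y: S\cup Y\in E(\cH),\ Y\subseteq R\}$, so $\mathbb E[Z]=d_{\cH}(S)2^{-(r-j)}$. The variables are the indicators $\mathbf 1_{v\in R}$ for the vertices $v$ in the link of $S$. Flipping one vertex $v$ changes $Z$ by at most $d_{\cH}(S\cup\{v\})\le\Delta_{j+1}(\cH)\le\lambda q^{r-j-1}$, where for $j=r-1$ this bound is $1$. Hence
\[
\sum_v c_v^2\le \lambda q^{r-j-1}\sum_v c_v\le \lambda q^{r-j-1}(r-j)\,d_{\cH}(S)\le (r-j)\lambda^2 q^{2(r-j)-1}.
\]
Lemma~\ref{lem:Bd-ineq} with $t=Cq^{r-j}\sigma(q)$ then gives failure probability at most $\exp\bigl(-2C^2\log q/((r-j)\lambda^2)\bigr)=q^{-c'C^2}$. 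For $j=r-1$ this is plain Hoeffding, since $c_v=1$ and there are at most $\lambda q$ terms.

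\textbf{Applying the local lemma.} Events are indexed by parts $V_i$ and by sets $S$ with $d_{\cH}(S)>0$ (other $S$ give trivial events). Each event is determined by the coins on its domain: $V_i$ for part events, and the union of the links of $S$ for codegree events. These domains have size at most $q$ and at most $(r-j)\lambda q^{r-j}$ respectively. Two events are dependent only if their domains intersect.

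We count the dependents $d$ of a single event. A vertex $v$ lies in the domain of at most one part event. It lies in the domain of the codegree event of $S$ only if $S\cup\{v\}$ is contained in an edge through $v$. The number of such $S$ of each size is at most $\binom{r}{j}d_{\cH}(v)\le\binom rj\lambda q^{r-1}$, using the $j=1$ hypothesis. So
\[
d\le \mathrm{poly}(q)=O(q^{2r}),
\]
independent of $m$.

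Choosing $\ell$ and $C$ large makes $p\le q^{-3r}$, so $ep(d+1)\le1$ once $q\geq q_0$, and Lemma~\ref{lem:lll} applies. The main obstacle is exactly this dependency count. The degree bound $\Delta_1(\cH)\le\lambda q^{r-1}$ is essential to make $d$ polynomial in $q$ rather than dependent on $m$. The bounded-difference estimate via $\Delta_{j+1}$ is what yields the $\sqrt{\log q/q}$ scale.
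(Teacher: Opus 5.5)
Your proposal is correct and follows essentially the same route as the paper: independent $1/2$-sampling, then Hoeffding for the part sizes and the bounded differences inequality with $c_v=d_{\cH}(S\cup\{v\})$ and $\sum_v c_v^2\le \Delta_{j+1}(\cH)\,(r-j)\,d_{\cH}(S)$ for the codegrees. It then uses the local lemma with a dependency count polynomial in $q$ (via the $\Delta_1$ bound), followed by trimming and the rescaling by $(q')^{r-j}$. The only differences are cosmetic: you use a separate constant $C$ for the codegree deviation instead of reusing $\ell$, and you use the cruder bounds $d=O(q^{2r})$ and $p\le q^{-3r}$.
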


\begin{proof}
Choose $\ell=\ell(r,\lambda)\geq1$ such that $2\ell^2/((r-1)\lambda^2)\geq2r+2$, and let $q^*=q^*(r,\lambda)$ be sufficiently large.
Recall that $\sigma(q)=\sqrt{(\log q)/q}$.

Select each vertex of $V(\cH)$ independently with probability $1/2$, let $R$ be the resulting set, and write $I_x=\mathbf{1}_{\{x\in R\}}$ for each $x\in V(\cH)$.
For each $i\in[m]$, set $Y_i=|V_i\cap R|$ and define the bad event $A_i:=\{Y_i<q/2-\ell\sqrt{q\log q}\}$.
Since $Y_i\sim\operatorname{Bin}(q,1/2)$ and $\mathbb E [Y_i]=q/2$, Lemma~\ref{lem:H-ineq} gives
\[
\Pr(A_i)\leq\exp(-2\ell^2\log q)=q^{-2\ell^2}.
\]

Next, fix $j\in[r-1]$ and a set $S\in\binom{V(\cH)}{j}$ with $d_{\cH}(S)>0$, and put $s=r-j$.
Let $X_S$ denote the number of the edges $e\in E(\cH)$ such that $S\subseteq e$ and $e\setminus S\subseteq R$.
By linearity of expectation, $\mathbb E [X_S]=2^{-s}d_{\cH}(S)$.
Define the bad event $B_S:=\{X_S>2^{-s}d_{\cH}(S)+\ell q^{s-1/2}\sqrt{\log q}\}$.

The variable $X_S$ is a function of the independent variables $\{I_x:x\in V(\cH)\setminus S\}$.
Changing only $I_x$ changes $X_S$ by at most $c_x:=d_{\cH}(S\cup\{x\})$.
Moreover, $\sum_{x\notin S}c_x=sd_{\cH}(S)$, since each edge containing $S$ contributes exactly $s$ to this sum.
Consequently,
\[
\sum_{x\notin S}c_x^2\leq\Delta_{j+1}(\cH)\sum_{x\notin S}c_x=s\Delta_{j+1}(\cH)d_{\cH}(S)\leq s\lambda^2q^{2s-1}.
\]
Here the bound $\Delta_{j+1}(\cH)\leq\lambda q^{s-1}$ also holds when $j=r-1$, since $\Delta_r(\cH)\leq1\leq\lambda$.
Applying Lemma~\ref{lem:Bd-ineq} with $t=\ell q^{s-1/2}\sqrt{\log q}$ therefore gives
\[
\Pr(B_S)\leq\exp\left(-\frac{2\ell^2\log q}{s\lambda^2}\right)=q^{-2\ell^2/(s\lambda^2)}.
\]
By our choice of $\ell$, every bad event has probability at most $p:=q^{-2r-2}$.

We next bound the dependencies among the bad events.
Each $A_i$ is determined by exactly $q$ random variables $\{I_x:x\in V_i\}$.
Each $B_S$ is determined by the variables $I_x$ for which $x\notin S$ and $d_{\cH}(S\cup\{x\})>0$, of which there are at most $sd_{\cH}(S)\leq(r-1)\lambda q^{r-1}$.
Thus every bad event is determined by at most $(r-1)\lambda q^{r-1}$ variables.
For a fixed vertex $x\in V(\cH)$, the variable $I_x$ appears in exactly one bad event $A_i$.
It can appear in $B_S$ only if nonempty $S\subseteq e\setminus\{x\}$ for some $e$ containing $x$.
There are $2^{r-1}-1$ such subsets for each edge $e$, so $I_x$ appears in at most
$1+(2^{r-1}-1)d_{\cH}(x)\leq2^{r-1}\lambda q^{r-1}$
bad events.

Join two bad events if they depend on a common variable $I_x$.
Since the variables $I_x$ are independent, this gives a dependency graph with maximum degree $d$ satisfying $d+1\leq(r-1)2^{r-1}\lambda^2q^{2r-2}$.
Hence
$ep(d+1)\leq e(r-1)2^{r-1}\lambda^2q^{-4}<1$
for all $q\geq q^*$, provided that $q^*$ is sufficiently large.
By Lemma~\ref{lem:lll}, with positive probability none of the bad events occurs.
Fix such a choice of $R$.

For every $i\in[m]$, we have $|V_i\cap R|\geq q/2-\ell\sqrt{q\log q}$.
We may therefore choose a set $W_i\subseteq V_i\cap R$ of size $q':=\lfloor q/2-\ell\sqrt{q\log q}\rfloor$.
Let $\cH':=\cH[\bigcup_{i\in[m]}W_i]$.
Fix $j\in[r-1]$, put $s=r-j$, and consider a set $S\in\binom{V(\cH')}{j}$ with $d_{\cH}(S)>0$.
Every edge of $\cH'$ containing $S$ is counted by $X_S$, and the event $B_S$ does not occur.
Thus
\[
d_{\cH'}(S)\leq X_S\leq2^{-s}d_{\cH}(S)+\ell q^s\sigma(q).
\]
Taking the maximum over all $j$-sets gives
\[
\Delta_j(\cH')\leq2^{-s}\Delta_j(\cH)+\ell q^s\sigma(q).
\]

It remains to account for the change in part size.
Increasing $q^*$ if necessary, we may assume that $q\geq e$ and $8(r-1)\ell\sigma(q)\leq1$.
Since $\ell\geq1$ and $1/q\leq\sigma(q)$, we have
$q'\geq\frac q2-\ell q\sigma(q)-1\geq\frac q2\left(1-4\ell\sigma(q)\right)$.
For every $s\in[r-1]$, Bernoulli's inequality implies
$\left(1-4\ell\sigma(q)\right)^{-s}\leq\left(1-4s\ell\sigma(q)\right)^{-1}\leq1+8s\ell\sigma(q)\leq2$.
Combining these estimates and using $\Delta_j(\cH)/q^s\leq\lambda$, we obtain
\[
\frac{\Delta_j(\cH')}{(q')^s}\leq\left(\frac{\Delta_j(\cH)}{q^s}+2^s\ell\sigma(q)\right)\left(1-4\ell\sigma(q)\right)^{-s}\leq\frac{\Delta_j(\cH)}{q^s}+\ell\left(8s\lambda+2^{s+1}\right)\sigma(q).
\]
Since $s=r-j$, the desired conclusion follows by taking $z:=\ell(8(r-1)\lambda+2^r)$.
\end{proof}

 Before stating the next lemma, we make a simple observation about the vertex partition.
For an $r$-graph $\cH$ with a partition $\mathcal P=\{V_1,\ldots,V_m\}$, an edge $e\in E(\cH)$ is \emph{crossing} with respect to $\mathcal P$ if $|e\cap V_i|\leq1$ for every $i\in[m]$.
Since a non-crossing edge contains at least two vertices from the same part, it cannot be contained in any transversal of $\mathcal P$.
Deleting all non-crossing edges therefore preserves the independent transversals and does not increase any vertex degree or any set codegree.
Thus, in proving Lemma~\ref{lem:indep-trans}, we may assume that every edge is crossing.

We next derive from Lemma~\ref{lem:DP} the following sufficient condition for the existence of an independent transversal.

\begin{lemma}\label{lem:indep-trans}
For every integer $r\geq3$ and every $\eta\in(0,1)$, there exist constants $c, q_0>0$ such that the following holds for every $q\geq q_0$.
Let $\cH$ be an $r$-graph with a vertex partition $V(\cH)=V_1\cup\cdots\cup V_m$ satisfying
\begin{itemize}
    \item $|V_i|\geq q$ for every $i\in[m]$;
    \item $\Delta_1(\cH)\leq cq^{r-1}\log q$;
    \item $\Delta_j(\cH)\leq q^{r-j-\eta}$ for every $2\leq j<r$.
\end{itemize}
Then $\cH$ has an independent transversal.
\end{lemma}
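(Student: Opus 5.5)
We reduce to Lemma~\ref{lem:DP} via the auxiliary bipartite graph. As remarked above, we may assume every edge of $\cH$ is crossing. We may also assume every part has size exactly $q$ by passing to an arbitrary $q$-subset $W_i\subseteq V_i$ for each $i$. An independent transversal of $\cH[\bigcup_i W_i]$ is an independent transversal of $\cH$, and codegrees do not increase under taking induced subhypergraphs.

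The bipartite graph $G=(A,B)$ is built as follows. The set $A=\{a_1,\dots,a_m\}$ has one vertex per part. The set $B$ consists of the vertices of $\cH$ together with a set $P$ of dummy ``padding'' vertices. Each $a_i$ is joined to every $x\in V_i$. This alone gives $a_i$ degree $q$ and each $x$ degree $1$. These degrees are not in the regime of Lemma~\ref{lem:DP}: vertices of $B$ should have degree about $D$, and we need $D$ large, comparable to $q$. So we instead take $D:=q/(1+q^{-\alpha'})$, or more simply set $D:=\lfloor q/2\rfloor$. Then every vertex of $A$ has degree $q\geq(1+D^{-\alpha})D$ for large $q$, and every vertex of $B$ has degree $1\leq D$. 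The upper bound on $B$-degrees is harmless even though it is far from tight.

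The configuration hypergraph $\mathcal F$ has vertex set $E(G)$. For each edge $e=\{x_1,\dots,x_r\}$ of $\cH$ with $x_t\in V_{i_t}$, distinct parts since $e$ is crossing, we add the $r$-matching $\{a_{i_1}x_1,\dots,a_{i_r}x_r\}$ to $\mathcal F$. An $A$-perfect matching selects one vertex from each part, that is, a transversal $T$. It is $\mathcal F$-avoiding exactly when no edge of $\cH$ lies inside $T$, so such a matching is an independent transversal.

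Next we check the codegree conditions. An $\ell$-set of edges of $G$ that lies in some member of $\mathcal F$ must be a matching $\{a_{i_t}x_t\}$ with distinct $x_t$. Its codegree in $\mathcal F$ equals $d_{\cH}(\{x_1,\dots,x_\ell\})$. Hence $\Delta_\ell(\mathcal F)=\Delta_\ell(\cH)$ for all $\ell<r$. This gives two conditions to verify:
\[
\Delta_1(\mathcal F)\leq cq^{r-1}\log q\leq \alpha D^{r-1}\log D
\quad\text{and}\quad
\Delta_\ell(\mathcal F)\leq q^{r-\ell-\eta}\leq D^{r-\ell-\beta}.
\]
For the second, choose $\beta:=\eta/2$. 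Since $D\geq q/3$, we have $D^{r-\ell-\beta}\geq 3^{-r}q^{r-\ell-\eta/2}\geq q^{r-\ell-\eta}$ once $q$ is large. Lemma~\ref{lem:DP} with this $\beta$ then supplies $\alpha$ and $D_\beta$. For the first, take $c:=\alpha 3^{-r}/2$. Then $cq^{r-1}\log q\leq \alpha (q/3)^{r-1}\cdot\tfrac12\cdot 3\log q\cdot 3^{-1}\le \alpha D^{r-1}\log D$ for large $q$, using $\log D\geq \tfrac12\log q$. Finally, choose $q_0$ so that $D\geq D_\beta$ and all the above inequalities hold. Lemma~\ref{lem:DP} then yields the matching, and hence the independent transversal.

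The main obstacle is the degree normalization. Lemma~\ref{lem:DP} requires $A$-degrees slightly above $D$ and $B$-degrees at most $D$, and it measures codegrees against the same $D$. The natural graph has $B$-degrees equal to $1$, so $D$ must be chosen equal to the $A$-degree scale $q$, up to a constant, rather than the $B$-degree. The real content of the proof is checking that the statement of Lemma~\ref{lem:DP} allows $B$-degrees to be much smaller than $D$; as stated, it only imposes an upper bound. If a version with near-regular $B$-degrees were needed, we would first blow up each $x\in V(\cH)$ into $D$ copies while keeping each part's neighbourhood structure. The cost is that this would multiply configuration codegrees, so we expect to rely on the upper-bound-only formulation as stated.
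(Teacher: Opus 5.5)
Your proposal is correct and follows the paper's proof essentially step for step. You use the same bipartite graph with $a_i$ joined to $V_i$, the same choices $D=\lfloor q/2\rfloor$ and $\beta=\eta/2$, the same observation that Lemma~\ref{lem:DP} only needs an upper bound on $B$-degrees, and a value of $c$ that differs from the paper's only by a harmless constant factor. The trimming of parts to size exactly $q$ and the dummy padding vertices in $B$ are unnecessary, since the paper takes $B=V(\cH)$ and works with $|V_i|\geq q$ directly, but they do no harm.
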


\begin{proof}

Set $\beta:=\eta/2$, and let $D_\beta\geq2$ and $\alpha>0$ be the constants given by Lemma~\ref{lem:DP}.
Choose $c:=\alpha/3^r$ and an integer $q_0$ such that
$q_0\geq2(D_\beta+1)$ and $3^{r-2-\beta}(q_0)^{-\beta}\leq1$.
For $q\geq q_0$, put $D:=\lfloor q/2\rfloor$.
Then $D\geq D_\beta+1\geq3$, $2D\leq q<3D$, and $\log q\leq3\log D$.

Let $A:=\{a_1,\ldots,a_m\}$ be a set disjoint from $V(\cH)$, and let $B:=V(\cH)$.
Construct a bipartite graph $G=(A,B)$ by connecting $a_i$ to every vertex of $V_i$ for each $i\in[m]$.
Define the bijection $\phi:V(\cH)\to E(G)$ by $\phi(v):=a_iv$ for $v\in V_i$, and write $\phi(S):=\{\phi(v):v\in S\}$ for $S\subseteq V(\cH)$.
Let $\mathcal F$ be the $r$-graph with $V(\mathcal F):=E(G)$ and $E(\mathcal F):=\{\phi(e):e\in E(\cH)\}$.
Since every $e\in E(\cH)$ is crossing, $\phi(e)$ is a matching of size $r$ in $G$.
Thus $\mathcal F$ is a configuration hypergraph for $G$.
For every $j\in[r-1]$ and every $S\in\binom{V(\cH)}{j}$, the edges of $\mathcal F$ containing $\phi(S)$ correspond exactly to the edges of $\cH$ containing $S$.
Hence $d_{\mathcal F}(\phi(S))=d_{\cH}(S)$, and the bijectivity of $\phi$ gives $\Delta_j(\mathcal F)=\Delta_j(\cH)$ for every $j\in[r-1]$.

We now verify the hypotheses of Lemma~\ref{lem:DP}.
Since $D\geq D_\beta\geq2$, every $a_i\in A$ satisfies
\[
d_G(a_i)=|V_i|\geq q\geq2D\geq(1+D^{-\alpha})D.
\]
Moreover, $d_G(v)=1\leq D$ for every $v\in B$.
Using $q\leq3D$, $\log q\leq3\log D$, and the choice of $c$, we obtain
\[
\Delta_1(\mathcal F)=\Delta_1(\cH)\leq cq^{r-1}\log q\leq c3^rD^{r-1}\log D=\alpha D^{r-1}\log D.
\]
For $2\leq j<r$, put $s:=r-j\in[r-2]$.
Since $D\geq q/3$ and $\eta=2\beta$, the choice of $q_0$ gives
\[
\frac{\Delta_j(\mathcal F)}{D^{s-\beta}}\leq\frac{q^{s-\eta}}{D^{s-\beta}}\leq3^{s-\beta}q^{-\beta}\leq3^{r-2-\beta}(q_0)^{-\beta}\leq1.
\]
Thus $\Delta_j(\mathcal F)\leq D^{r-j-\beta}$ for every $2\leq j<r$.

All hypotheses of Lemma~\ref{lem:DP} are therefore satisfied, so $G$ contains an $\mathcal F$-avoiding $A$-perfect matching $M$.
Its endpoints in $B$ form a set $T$ containing exactly one vertex from each part $V_i$.
If an edge $e\in E(\cH)$ were contained in $T$, then $\phi(e)\subseteq M$ would be an edge of $\mathcal F$, contradicting the fact that $M$ is $\mathcal F$-avoiding.
Therefore, $T$ is an independent transversal of $\cH$.
\end{proof}

\section{Proof of Theorem~\ref{thm:mainthm}}\label{sec:pf-main}
Our first step is to show that $\cH$ admits an independent transversal. We then prove that it contains at least $\lceil(1-\varepsilon)b\rceil$ pairwise vertex-disjoint independent transversals.

We first consider the special case in which all parts have the same size and the maximum codegrees are bounded.

\begin{claim}\label{claim:max-degree}
For every integer $r\geq3$ and every $k>0$, there exist constants
$\gamma>0$ and $q_0>0$ such that the following holds.
Let $\cH$ be an $r$-graph with a partition
$V(\cH)=V_1\cup\cdots\cup V_m$ satisfying
$|V_1|=\cdots=|V_m|=q\geq q_0$,
$\Delta_1(\cH)\leq kq^{r-1}$, and
$\Delta_j(\cH)\leq\gamma q^{r-j}$ for every $2\leq j<r$.
Then, the hypergraph $\cH$ has an independent transversal.
\end{claim}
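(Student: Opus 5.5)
We iterate the Halving Lemma (Lemma~\ref{lem:halving}) until the part size drops to a large constant, and then apply Lemma~\ref{lem:indep-trans}. Set $\lambda:=\max\{k,1\}+1$, and let $\ell,z,q_*$ be the constants from Lemma~\ref{lem:halving} for $r$ and $\lambda$. Fix $\eta:=1/2$, and let $c,q_0'$ be the constants from Lemma~\ref{lem:indep-trans}. An independent transversal of an induced subhypergraph on sets $W_i\subseteq V_i$ is an independent transversal of $\cH$, so we may pass to induced subhypergraphs freely.

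Starting from $\cH_0:=\cH$ with part size $q_0:=q$, apply Lemma~\ref{lem:halving} repeatedly. This produces induced subhypergraphs $\cH_{t}$ with equal part sizes $q_{t+1}=\lfloor q_t/2-\ell\sqrt{q_t\log q_t}\rfloor$. We stop at the first index $T$ with $q_T\le Q$, where $Q$ is a large constant to be chosen. Write $\mu_j(t):=\Delta_j(\cH_t)/q_t^{r-j}$. By \eqref{equ:norm-degree},
\[
\mu_j(t)\le \mu_j(0)+z\sum_{s<t}\sigma(q_s).
\]
Since $q_s\ge 2^{T-s}Q/3$ roughly, the sequence $\sigma(q_s)$ grows geometrically in $s$. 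Hence the sum is dominated by its last terms and is at most $C\sigma(Q)$ for an absolute constant $C$. We choose $Q$ so large that $zC\sigma(Q)\le 1$. This keeps every $\mu_j(t)\le\lambda$ throughout, so the lemma's hypothesis is maintained at every step; it also needs $Q\ge q_*$. At the end we have $\mu_1(T)\le k+zC\sigma(Q)$ and $\mu_j(T)\le\gamma+zC\sigma(Q)$ for $j\ge2$. Also $q_T\in[Q/3,Q]$.

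The obstacle is to get $\Delta_j(\cH_T)\le q_T^{r-j-\eta}$ for $j\ge 2$, which requires $\mu_j(T)\le q_T^{-\eta}$. The additive error $zC\sigma(Q)\approx\sqrt{\log Q/Q}$ is much larger than $Q^{-1/2}$ only by a log factor. Taking $\eta$ slightly below $1/2$, say $\eta:=1/4$, gives $zC\sigma(Q)\le \tfrac12 (Q)^{-1/4}\le\tfrac12 q_T^{-1/4}$ for large $Q$. Then choose $\gamma:=\tfrac12 Q^{-1/4}$, which is a constant depending only on $r$ and $k$. This gives $\mu_j(T)\le q_T^{-1/4}$.

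For $j=1$ we need $\Delta_1(\cH_T)\le c q_T^{r-1}\log q_T$. We have $\mu_1(T)\le k+1$, so it suffices that $c\log(Q/3)\ge k+1$, which holds for $Q$ large. Finally, take $Q\ge 3q_0'$. Lemma~\ref{lem:indep-trans} then yields an independent transversal of $\cH_T$, and hence of $\cH$.

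Two details remain. The case $q< $ the threshold (already $q\le Q$) is handled directly with $T=0$, since then the same bounds hold with $\gamma$ small. The initial $q_0$ should be taken to be $Q$. The main care point is the geometric-sum bound on $\sum\sigma(q_s)$, which uses $q_{s+1}\le q_s/2$ and $q_{s+1}\ge q_s/3$ for $q_s\ge Q$.
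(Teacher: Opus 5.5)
Your proposal is correct and follows the paper's proof essentially step by step. You iterate Lemma~\ref{lem:halving} while the part size exceeds a constant threshold (your $Q$ plays the role of the paper's $3q_0$), and you bound the accumulated error by $Cz\sigma(Q)$ using the geometric decay that comes from $q_{s+1}\le q_s/2$. You then take $\eta=1/4$ and $\gamma=\tfrac12Q^{-1/4}$ to absorb the $\sqrt{\log Q}$ loss, and finish with Lemma~\ref{lem:indep-trans} at a part size in $[Q/3,Q]$. Two small fixes: take $c$ and $q_0'$ from Lemma~\ref{lem:indep-trans} with $\eta=1/4$ rather than the $\eta=1/2$ you state first, and avoid using $q_0$ both for the claim's threshold and for the initial part size.
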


\begin{proof}
Let $\lambda:=\max\{1,k\}+2$.
Apply Lemma~\ref{lem:halving} with $\alpha=\lambda$ to obtain constants $\ell,z,q_{\ref{lem:halving}}>0$.
Also, apply Lemma~\ref{lem:indep-trans} with $\eta=1/4$ to obtain constants $c,q_{\ref{lem:indep-trans}}>0$.

Recall that $\sigma(x)=\sqrt{\log x/x}$.
Choose a constant $C>0$ such that
$\sum_{h=0}^{\infty}2^{-h/2}\sqrt{1+h}<C$.
Choose a sufficiently large integer $q_0=q_0(r,k)$ such that
$q_0\geq\max\{q_{\ref{lem:halving}},q_{\ref{lem:indep-trans}},e\}$, $k+1\leq c\log q_0$,
$Cz\sigma(q_0)\leq\frac12(3q_0)^{-1/4}$ and $\ell\sigma(q_0)+{1}/{q_0}\leq1/6$.
Such a choice is possible since
$\sigma(x)=o(x^{-1/4})$ as $x\to\infty$.

Set
$\gamma:=\frac12(3q_0)^{-1/4}$.
Starting with $\cH_1:=\cH$ and $q_1:=q$, we construct a sequence
of induced subhypergraphs.
At each $i$-th step, the parts of $\cH_i$ have same size $q_i\geq q_0$.
Then we have
\begin{equation}\label{eq:max-degree}
\frac{\Delta_1(\cH_i)}{q_i^{r-1}}
\leq k+z\sum_{\ell=1}^{i-1}\sigma(q_\ell)
\end{equation}
and for every $2\leq j<r$,
\begin{equation}\label{eq:higher-codegree}
\frac{\Delta_j(\cH_i)}{q_i^{r-j}}
\leq \gamma+z\sum_{\ell=1}^{i-1}\sigma(q_\ell).
\end{equation}
Note that both \eqref{eq:max-degree} and \eqref{eq:higher-codegree} hold when $i=1$.

Suppose that $\cH_1,\ldots,\cH_i$ have been constructed and that
$q_{\ell+1}\leq q_\ell/2$ for each $\ell\in [i-1]$ and thus $q_\ell\geq 2q_{\ell-1}\ge \cdots\ge  2^{i-\ell}q_i$.
Since $\sigma(x)=\sqrt{\log x/x}$ is decreasing on $[e,\infty)$, 
it follows that
\begin{eqnarray}
 \sum_{\ell=1}^{i-1}\sigma(q_\ell)
\leq
\sum_{h=0}^{i-1}\sigma(2^h q_{i})
&=&
\sigma(q_{i})\sum_{h=0}^{i-1}2^{-h/2}\sqrt{1+\frac{h \log 2}{\log q_{i}}} \nonumber\\
&\leq&
\sigma(q_{i})\sum_{h=0}^{i-1}2^{-h/2}\sqrt{1+h}
<
C\sigma(q_{i})
\leq
C\sigma(q_0).\label{eq:error-sum}
\end{eqnarray}
Combining~\eqref{eq:max-degree}--\eqref{eq:error-sum} gives
\[
\frac{\Delta_1(\cH_i)}{q_i^{r-1}}
\leq k+Cz\sigma(q_0)
\leq k+\gamma
<k+1<\lambda
\]
and, for every $2\leq j<r$,
\[
\frac{\Delta_j(\cH_i)}{q_i^{r-j}}
\leq \gamma+Cz\sigma(q_0)
\leq 2\gamma
=(3q_0)^{-1/4}<1<\lambda.
\]
So the hypotheses of Lemma~\ref{lem:halving} hold at every step.

If $q_i\leq3q_0$, this process stops and set $t:=i$.
Otherwise, since $q_i\geq q_0\geq q_{\ref{lem:halving}}$,
the preceding bounds allow us to apply Lemma~\ref{lem:halving} to $\cH_i$.
This yields an induced subhypergraph $\cH_{i+1}$ whose parts have same size
$q_{i+1}:=\left\lfloor{q_i}/{2}
-\ell\sqrt{q_i\log q_i}\right\rfloor$.
So we have
$q_{i+1}
\geq {q_i}/{2}-\ell\sqrt{q_i\log q_i}-1
\geq {q_i}/{3}>q_0$.
Moreover, the maximum codegree estimate~\eqref{equ:norm-degree} in
Lemma~\ref{lem:halving} implies
\eqref{eq:max-degree} and~\eqref{eq:higher-codegree}
with $i+1$ in place of $i$.
Thus this construction can be continued whenever $q_i>3q_0$.

Since $q_{i+1}\leq q_i/2$ at each step, the process terminates at $\cH_t$ for some $t\geq1$, with $q_0\leq q_t\leq3q_0$.
The preceding estimates give
\[
\frac{\Delta_1(\cH_t)}{q_t^{r-1}}
<k+1\leq c\log q_0\leq c\log q_t
\]
and, for every $2\leq j<r$,
\[
\frac{\Delta_j(\cH_t)}{q_t^{r-j}}
\leq 2\gamma
=(3q_0)^{-1/4}
\leq q_t^{-1/4}.
\]
So $\Delta_1(\cH_t)\leq cq_t^{r-1}\log q_t$ and
$\Delta_j(\cH_t)\leq q_t^{r-j-1/4}$ for every $2\leq j<r$.

Since $q_t\geq q_0\geq q_{\ref{lem:indep-trans}}$,
Lemma~\ref{lem:indep-trans} with $\eta=1/4$ yields an independent
transversal of $\cH_t$.
As $\cH_t$ is an induced subhypergraph of $\cH$, this is also an independent transversal of $\cH$.
\end{proof}

Now we are ready to prove Theorem~\ref{thm:mainthm}.

\begin{proof}[Proof of Theorem~\ref{thm:mainthm}]
Fix an integer $r\geq3$, a constant $k>0$, and $\varepsilon\in(0,1)$.
Set $K=k(2/\varepsilon)^r$, and let $\delta>0$ and $q_0\geq1$ be the constants given by Claim~\ref{claim:max-degree} with $K$.
Choose
$\gamma=\delta\left({\varepsilon}/{2}\right)^{r-2}$ and $b_0=\left\lceil{2q_0}/{\varepsilon}\right\rceil$.

Let $\cH$ be an $r$-graph satisfying the hypotheses of the theorem with $b\geq b_0$.
For each $i\in[m]$, let
$W_i=\left\{v\in V_i:d_{\cH}(v)\leq({2k}/{\varepsilon})b^{r-1}\right\}$.
Then it gives
\[
|V_i\setminus W_i|\leq\frac{\varepsilon}{2kb^{r-1}}\sum_{v\in V_i}d_{\cH}(v)\leq\frac{\varepsilon}{2}|V_i|.
\]
So we get
$|W_i|\geq\left(1-\frac{\varepsilon}{2}\right)|V_i|\geq\left(1-\frac{\varepsilon}{2}\right)b$.

Set $N=\lceil(1-\varepsilon)b\rceil$ and $q=\lceil\varepsilon b/2\rceil$, so that $q\geq q_0$.
We construct $N$ pairwise vertex-disjoint independent transversals using only vertices in $\bigcup_{i=1}^m W_i$.

Suppose that $s<N$ such transversals $T_1,\ldots,T_s$ have already been chosen.
For each $i\in[m]$, let
$W_i^{(s)}=W_i\setminus\bigcup_{t=1}^s T_t$.
Since $s<(1-\varepsilon)b$, each previously chosen transversal contains exactly one vertex from $W_i$, and hence
$\left|W_i^{(s)}\right|=|W_i|-s\geq\left(1-\frac{\varepsilon}{2}\right)b-s>{\varepsilon b}/{2}$.
Thus, for each $i\in[m]$, there is a subset $U_i\subseteq W_i^{(s)}$ of size $q$.

Let $\cH'=\cH[\bigcup_{i=1}^m U_i]$.
Since for any $v\in V(\cH')$, one knows that $v\in W_i$ for some $i\in [m]$.
Note that $b\leq2q/\varepsilon$.
Therefore,
\[
\Delta_1(\cH')\leq\frac{2k}{\varepsilon}b^{r-1}\leq\frac{2k}{\varepsilon}\left(\frac{2q}{\varepsilon}\right)^{r-1}=Kq^{r-1}.
\]
Moreover, for every $2\leq j<r$,
\[
\Delta_j(\cH')\leq\Delta_j(\cH)\leq\gamma b^{r-j}\leq\gamma\left(\frac{2}{\varepsilon}\right)^{r-j}q^{r-j}=\delta\left(\frac{\varepsilon}{2}\right)^{j-2}q^{r-j}\leq\delta q^{r-j}.
\]
Since all $U_i$ have size $q\geq q_0$, Claim~\ref{claim:max-degree} gives an independent transversal $T_{s+1}$ of $\cH'$ with respect to $\{U_1,\ldots,U_m\}$.
As $\cH'$ is induced and $U_i\subseteq V_i$ for every $i\in[m]$, this is also an independent transversal of $\cH$.
It is disjoint from $T_1,\ldots,T_s$ by construction.

This construction process can therefore be continued until $N=\lceil(1-\varepsilon)b\rceil$ pairwise vertex-disjoint independent transversals have been found.
\end{proof}

\bigskip
\noindent{\bf \large Declaration on the use of AI} \\
The proof was developed with the help of ChatGPT Pro model in which the
authors guided its progress. The authors take full responsibility for the correctness of the paper.
\bibliographystyle{abbrv}
\bibliography{refs}
\end{document}